\theoremstyle{plain}
\newtheorem{definition}{Definition}
\newtheorem{lemma}[definition]{Lemma}
\newtheorem{theorem}[definition]{Theorem}
\newtheorem{conjecture}[definition]{Conjecture}
\newcommand{\lef}{\mathcal{L}}
\newcommand{\pre}{\mathcal{P}}
\newcommand{\n}{\mathcal{N}}
\newcommand{\ri}{\mathcal{R}}
\newcommand{\ti}{\mathcal{T}} 
\newcommand{\gfr}{G_F^{SR}}
\newcommand{\gfl}{G_F^{SL}}
\newcommand{\plus}{+_{\ell}}
\newcommand{\G}{\mathcal{G}}
\begin{document}
\title{Impartial Scoring Play Games}
\author{Fraser Stewart\\
\texttt{fraseridstewart@gmail.com}}
\date{}

\maketitle{}

\centerline{\bf Abstract}
In this paper we will be examining impartial scoring play games.  We first give the basic definitions for what impartial scoring play games are and look at their general structure under the disjunctive sum.  We will then examine the game of nim and all octal games, and define a function that can help us analyse these games.  We will finish by looking at the properties this function has and give many conjectures about the behaviour this function exhibits.

\section{Introduction}

Nim is a game that has been studied by combinatorial game theorists for many years.  In fact many people believe that nim is in fact the only true combinatorial game.  For normal and mis\`ere games an impartial game is defined to be a game where both players have the same options at all stages in the game.  Nim, and all games based on it, are used as the core games in any analysis of impartial games.  The rules of nim are defined as follows;

\begin{enumerate}
\item{The game is played on heaps of beans.}
\item{A player may remove as many beans as he wants from any one heap.}
\item{Under normal play rules the last player to move wins, under mis\`ere play rules the last player to move loses.}
\end{enumerate}

These basic rules in fact led to a whole class of games where the only variations were the rules governing how many beans can be removed from particular heaps.  Sprague and Grundy devised an entire theory that tells you how to find winning strategies for any game of nim played under normal play rules \cite{PG}, \cite{PS}, \cite{RS}, that has become known as Sprague-Grundy theory.

It was not until very recently that a similar theory was devised for nim-games under mis\`ere rules, this theory was devised by Thane Plambeck \cite{TP}.

\subsection{Scoring Play Theory}

Until very recently scoring play games have not received the kind of treatment or analysis that normal and mis\`ere play games have.  This is even stranger when we consider the fact that one of the most well studied and well known scoring play games is the ancient Chinese game of Go.  The general definition of a scoring play game is given below, for further reading on the general structure of scoring play games see \cite{ES} and \cite{FS};

Mathematically scoring games are defined in the following way;

\begin{definition} A scoring play game $G=\{G^L|G^S|G^R\}$, where $G^L$ and $G^R$ are sets of games and $G^S\in\mathbb{R}$, the base case for the recursion is any game $G$ where $G^L=G^R=\emptyset$.\\

\noindent $G^L=\{\hbox{All games that Left can move to from } G\}$\\
\noindent $G^R=\{\hbox{All games that Right can move to from } G\}$,\\

and for all $G$ there is an $S=(P,Q)$ where $P$ and $Q$ are the number of points that Left and Right have on $G$ respectively.  Then $G^S=P-Q$, and for all $g^L\in G^L$, $g^R\in G^R$, there is a $p^L,p^R\in\mathbb{R}$ such that $g^{LS}=G^S+p^L$ and $g^{RS}=G^S+p^R$.

\end{definition}

A concept we will be using throughout this paper is the game tree of a game.  While it may be intuitively obvious to the reader, non-the-less, we feel it is important to define it mathematically.

\begin{definition}
The game tree of a scoring play game $G=\{G^L|G^S|G^R\}$ is a tree with a root node, and every node has children either on the Left or the Right, which are the Left and Right options of $G$ respectively.  All nodes are numbered, and are the scores of the game $G$ and all of its options.
\end{definition}

We also need to define a concept that we call the ``final score''.  This is something which hopefully the reader finds relatively intuitive.  When the game ends, which it will after a finite amount of time, the score is going to determine whether a player won, lost or tied.  Mathematically it is defined as follows.

\begin{definition}  We define the following:

\begin{itemize}
\item{$\gfl$ is called the Left final score, and is the maximum score --when Left moves first on $G$-- at a terminal position on the game tree of $G$, if both Left and Right play perfectly.}
\item{$\gfr$ is called the Right final score, and is the minimum score --when Right moves first on $G$-- at a terminal position on the game tree of $G$, if both Left and Right play perfectly.}
\end{itemize}

\end{definition}

For scoring play the disjunctive sum needs to be defined a little differently, because in scoring games when we combine them together we have to sum the games and the scores separately.  For this reason we will be using two symbols $\plus$ and $+$.  The $\ell$ in the subscript stands for ``long rule'', this comes from \cite{ONAG}, and means that the game ends when a player cannot move on any component on his turn.

\begin{definition}  The disjunctive sum is defined as follows:
$$G\plus H=\{G^L\plus H,G\plus H^L|G^S+H^S|G^R\plus H,G\plus H^R\},$$
\noindent where $G^S+H^S$ is the normal addition of two real numbers.
\end{definition}

The outcome classes also need to be redefined to take into account the fact that a game can end with a tied score.  So we have the following two definitions.

\begin{definition}
\item{$L_>=\{G|\gfl>0\}$, $L_<=\{G|\gfl<0\}$, $L_= =\{G|\gfl=0\}$.}
\item{$R_>=\{G|\gfr>0\}$, $R_<=\{G|\gfr<0\}$, $R_= =\{G|\gfr=0\}$.}
\item{$L_\geq=L_>\cup L_=$, $L_\leq = L_<\cup L_=$.}
\item{$R_\geq=R_>\cup R_=$, $L_\leq = R_<\cup R_=$.}
\end{definition}

\begin{definition}
The outcome classes of scoring games are defined as follows:

\begin{itemize}
\item{$\lef=(L_>\cap R_>)\cup(L_>\cap R_=)\cup(L_=\cap R_>)$}
\item{$\ri=(L_<\cap R_<)\cup(L_<\cap R_=)\cup(L_=\cap R_<)$}
\item{$\n=L_>\cap R_<$}
\item{$\pre=L_<\cap R_>$}
\item{$\ti=L_=\cap R_=$}
\end{itemize}
\end{definition}

\section{Impartial Scoring Games}

The definition of an impartial scoring play game is less intuitive than for normal and mis\`ere play games.  The reason for this is because we have to take into account the score, for example, consider the game $G=\{4|3|2\}$.  On the surface the game does not appear to fall into the category of an impartial game, since Left wins moving first or second, however this game is impartial since both players move and gain a single point, i.e. they both have the same options.

So we will use the following definition for an impartial game;

\begin{definition}
A scoring game $G$ is impartial if it satisfies the following;

\begin{enumerate}
\item{$G^L=\emptyset$ if and only if $G^R=\emptyset$.}
\item{If $G^L\neq \emptyset$ then for all $g^L\in G^L$ there is a $g^R\in G^R$ such that $g^L\plus -G^S=-(g^R\plus -G^S)$.}
\end{enumerate}
\end{definition} 

An example of an impartial game is shown in figure \ref{ex1}.  This game satisfies the definition since $2\plus -3=-(4\plus -3)$, $\{11|4|-3\}\plus -3=-(\{9|2|-5\}\plus -3)=\{8|1|-6\}$ and $11\plus -4=-(-3\plus -4)=9\plus -2=-(-5\plus -2)=7$.  

\begin{figure}[htb]
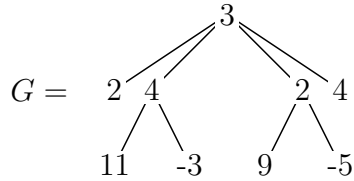

\begin{center}
\begin{graph}(4,2)

\roundnode{1}(1,0)\roundnode{2}(2,0)\roundnode{3}(3,0)\roundnode{4}(4,0)
\roundnode{5}(1,1)\roundnode{6}(1.5,1)\roundnode{7}(3.5,1)\roundnode{8}(4,1)
\roundnode{9}(2.5,2)

\edge{6}{1}\edge{6}{2}\edge{7}{3}\edge{7}{4}
\edge{9}{5}\edge{9}{6}\edge{9}{7}\edge{9}{8}

\nodetext{1}{11}\nodetext{2}{-3}\nodetext{3}{9}\nodetext{4}{-5}
\nodetext{5}{2}\nodetext{6}{4}\nodetext{7}{2}\nodetext{8}{4}
\nodetext{9}{3}

\freetext(0,1){$G=$}

\end{graph}
\end{center}
\caption{The impartial game $G=\{2,\{11|4|-3\}|3|,4,\{9|2|-5\}\}$}\label{ex1}
\end{figure}

\begin{theorem}
Impartial scoring play games form a non-trivial commutative monoid under the disjunctive sum.
\end{theorem}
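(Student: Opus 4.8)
The plan is to verify the four monoid requirements — closure under $\plus$, associativity, commutativity, and existence of an identity — together with non-triviality, proceeding by structural induction on the game trees wherever a recursion on options is needed. I would first dispose of the purely algebraic properties of $\plus$, which do not involve impartiality at all. Commutativity $G\plus H=H\plus G$ and associativity $(G\plus H)\plus K=G\plus(H\plus K)$ both follow by induction: expanding each side with the disjunctive-sum rule produces the same score (by commutativity and associativity of addition on $\mathbb{R}$) and option sets that agree after applying the inductive hypothesis to the options. The identity is the terminal game $e=\{\,|\,0\,|\,\}$ with $e^L=e^R=\emptyset$ and $e^S=0$; since its option sets are empty, $G\plus e=\{G^L\plus e\mid G^S\mid G^R\plus e\}$, which equals $G$ by induction. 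I would also record two standard facts proved the same way and used below: negation distributes over the sum, $-(G\plus H)=(-G)\plus(-H)$, and the terminal games satisfy $\{\,|\,s\,|\,\}\plus\{\,|\,t\,|\,\}=\{\,|\,s+t\,|\,\}$ with $-\{\,|\,s\,|\,\}=\{\,|\,-s\,|\,\}$.

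The substantive step is closure: that $G\plus H$ is impartial whenever $G$ and $H$ are. Condition (1) is immediate, since $(G\plus H)^L=\{G^L\plus H,G\plus H^L\}$ is empty exactly when $G^L=H^L=\emptyset$, and applying condition (1) to $G$ and to $H$ shows this happens iff $G^R=H^R=\emptyset$, i.e. iff $(G\plus H)^R=\emptyset$. For condition (2) I would isolate the key lemma: for every impartial $G$ the \emph{normalised} game $\hat G:=G\plus(-G^S)$ equals its own negative, $\hat G=-\hat G$. Granting this, take a Left option of $G\plus H$ of the form $g^L\plus H$ with $g^L\in G^L$, and let $g^R\in G^R$ be the partner supplied by condition (2) for $G$, so that $g^L\plus(-G^S)=-(g^R\plus(-G^S))$. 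Using commutativity, associativity, and distributivity of negation, one computes
$$(g^L\plus H)\plus(-(G^S+H^S))=(g^L\plus(-G^S))\plus\hat H=-(g^R\plus(-G^S))\plus(-\hat H)=-\big((g^R\plus H)\plus(-(G^S+H^S))\big),$$
where the middle equality uses the lemma $\hat H=-\hat H$. Hence $g^R\plus H\in(G\plus H)^R$ is the required partner for $g^L\plus H$; Left options of the form $G\plus h^L$ are matched symmetrically by $G\plus h^R$ using $\hat G=-\hat G$. A parallel induction shows every option of $G\plus H$ is itself impartial, so $G\plus H$ is impartial.

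The main obstacle is the lemma $\hat G=-\hat G$. Since $\hat G$ has score $0$, the equality $\hat G=-\hat G$ amounts to the set identities $\hat G^L=-\hat G^R$ and $\hat G^R=-\hat G^L$, whose elements are exactly the normalised options $g^L\plus(-G^S)$ and $-(g^R\plus(-G^S))$. Condition (2) delivers each $g^L\plus(-G^S)$ as some $-(g^R\plus(-G^S))$, and negating this equality shows each matched $g^R\plus(-G^S)$ equals $-(g^L\plus(-G^S))$; to close the argument I need the correspondence $g^L\leftrightarrow g^R$ to be a \emph{bijection} between $G^L$ and $G^R$, so that no unmatched Right option spoils the set equality. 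I expect this to be the delicate point: the literal wording of condition (2) only guarantees a one-directional matching, and I would argue (or adopt as the intended reading of impartiality, consistent with Figure~\ref{ex1}, where the two Left and two Right options pair off exactly) that the matching is symmetric, which is precisely the statement that both players have the same options after accounting for the score. Finally, non-triviality is immediate: every terminal game $\{\,|\,s\,|\,\}$ is impartial, these are pairwise distinct, and they are closed under $\plus$ with $\{\,|\,s\,|\,\}\plus\{\,|\,t\,|\,\}=\{\,|\,s+t\,|\,\}$, so the monoid contains a copy of $(\mathbb{R},+)$ and in particular more than one element.
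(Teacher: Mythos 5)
Your proposal and the paper's proof diverge at the word \emph{non-trivial}, and the divergence is the crux. You read ``non-trivial commutative monoid'' in the standard algebraic sense (the monoid has more than one element), which makes that clause nearly vacuous: distinct terminal games already give infinitely many elements, as your $(\mathbb{R},+)$ remark shows. The paper means something entirely different, as its proof makes explicit: setting $I=\{i\mid G\plus i\approx G \text{ for all impartial games } G\}$, the assertion is that the \emph{identity set} $I$ is non-trivial, i.e.\ contains more than the single game $\{.|0|.\}$ --- in deliberate contrast with general scoring play games, where (citing \cite{ES}) the identity set is exactly the singleton $\{.|0|.\}$. Accordingly, essentially the whole of the paper's proof consists of exhibiting a second element of $I$, namely $i=\{\{0|0|0\}|0|\{0|0|0\}\}$, and proving $G\plus i\approx G$ for every impartial $G$ by a Tweedledum--Tweedledee argument: whichever player moves on the $i$ component, the opponent answers on the $i$ component; every such move is worth $0$ points, so the parity of play on $G$ and the final score are unaffected, giving $(G\plus i)^{SL}_F$ the same sign as $\gfl$ in each of the three cases $\gfl>0$, $\gfl<0$, $\gfl=0$ (Right's cases by symmetry). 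None of this --- the set $I$, the witness $i$, the mirroring strategy --- appears in your proposal, so the theorem's actual content, as the author intends it, is not proved.

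That said, what you do prove is not wasted; it is orthogonal rather than wrong. The paper silently assumes closure, associativity, commutativity, and that $\{.|0|.\}$ is an identity, and your inductive verification of these is sound in outline; closure is in fact needed even for the paper's own argument to be complete, since $\approx$ for impartial games quantifies over impartial contexts $G\plus X$. Your observation that condition (2) of the impartiality definition is literally one-directional, so that the matching $g^L\leftrightarrow g^R$ need not be a bijection, is a legitimate criticism of the definition as stated, and your normalisation lemma $\hat G=-\hat G$ is the right repair once the symmetric reading is adopted. But to recover the paper's theorem you would still need its key step: produce an identity other than $\{.|0|.\}$ (the paper's $i$, or any impartial game whose play consists of an even number of score-neutral moves) and run the mirroring argument showing it leaves every impartial game unchanged up to $\approx$.
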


\begin{proof}  A monoid is a semi-group that has an identity.  Scoring games in general have an identity, but it is a set with only one element, namely $\{.|0|.\}$, the proof of this is given in \cite{ES}.  We will show that if we restrict scoring play games to impartial games, then there is an identity set that contains more than one element.

First we will define a subset of the impartial games as follows;

$$I=\{i|G\plus i\approx G, \hbox{ for all impartial games }G\}$$  

To show that we have a non-trivial monoid we have to show that $I$ contains more than one element.  So consider the following impartial game, $i=\{\{0|0|0\}|0|\{0|0|0\}\}$.

\begin{figure}[htb]
\begin{center}
\begin{graph}(3,2)

\roundnode{1}(0,0)\roundnode{2}(0.5,1)\roundnode{3}(1,0)\roundnode{4}(1.5,2)
\roundnode{5}(2,0)\roundnode{6}(2.5,1)\roundnode{7}(3,0)

\edge{1}{2}\edge{2}{3}\edge{4}{2}\edge{4}{6}\edge{5}{6}\edge{6}{7}

\nodetext{1}{0}\nodetext{2}{0}\nodetext{3}{0}\nodetext{4}{0}
\nodetext{5}{0}\nodetext{6}{0}\nodetext{7}{0}

\end{graph}
\end{center}
\caption{The game $\{\{0|0|0\}|0|\{0|0|0\}\}$}
\end{figure}

To show that $i\plus G\approx G$ for all impartial games $G$, there are 3 cases to consider $\gfl>0$, $\gfl<0$ and $\gfl=0$, since the cases for Right follow by symmetry.  First let $\gfl>0$, if Left has no move on $G$, then neither does Right, since $G$ is impartial, i.e. $G=G^S$, so they will play $i$ and the final score will still be $G^S$.

So let Left have a move on $G$, Left will choose his best move on $G$.  Right has two choices, either continue to play $G$, or play $i$, and attempt to change the parity of $G$, i.e. force Left to make two consecutive moves on $G$.  However Left will simply respond by also playing $i$, and then it will be Right's turn to move on $G$ again.  Thus $(G\plus i)^{SL}_F>0$.

Next let $\gfl<0$, this means that no matter what Left does, he will lose playing $G$ on $G\plus i$, since Right will simply respond in $G$, until $G$ is finished, then they will play $i$, which does not change the final score of $G$.  Again if Left tries to change the parity of $G$, by playing $i$, Right will also play $i$, and it will be Left's turn to move on $G$ again.  Therefore $(G\plus i)^{SL}_F<0$.

Finally let $\gfl=0$.  This means that Left's best move will be a move that eventually ties $G$.  The only reason Left or Right would choose to move on $i$ is again to change the parity of $G$ and potentially win, i.e. forcing your opponent to move twice on $G$.  However if Left, say, moves on $i$ then Right will simply respond by also playing $i$ and it will be Left's turn to move on $G$ again, and similarly if Right moves on $i$, meaning that $(G\plus i)^{SL}_F=0$.

So therefore the set of impartial games is a non-trivial monoid and the theorem is proven.

\end{proof}

\begin{conjecture}
Not all impartial scoring play games have an inverse.
\end{conjecture}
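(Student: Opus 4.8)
The plan is to establish the conjecture by producing a single impartial game $G$ together with a proof that no impartial game $H$ satisfies $G\plus H\in I$; since membership in the identity set $I$ is exactly what ``having an inverse'' means in this monoid, this suffices. Because every single--move impartial game $\{a\mid s\mid b\}$ with $s=(a+b)/2$ is cancelled by its conjugate, the candidate $G$ must have genuine depth, so I would begin the search among the shallowest impartial games built from $\{0\mid 0\mid 0\}$ together with a truly scored option such as $\{2\mid 1\mid 0\}$.

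First I would make the target precise. An inverse $H$ must at least force $G\plus H\in\ti$, that is $(G\plus H)^{SL}_F=(G\plus H)^{SR}_F=0$; but this is far weaker than $G\plus H\in I$. The game $\{0\mid 0\mid 0\}$ already exhibits the gap: it is a tie in isolation, yet adding one copy of it to a ``cold'' game such as $\{-1\mid 0\mid 1\}$ reverses the outcome, because the extra zero--scoring move flips the parity of who is forced to move under the long rule. So the heart of the argument is an invariant $\Phi$ on impartial games, as nearly additive under $\plus$ as possible, that (a) takes one fixed value on every member of $I$, and (b) is provably pinned away from that value by $G\plus H$ for every choice of $H$. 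The natural shape for $\Phi$ is a pair recording both the score that is ultimately locked in and a move--parity bit, since these are precisely the two features an identity must leave undisturbed.

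The steps, in order, would be: (i) check that the chosen $G$ is impartial and compute $\gfl$ and $\gfr$; (ii) prove that $\Phi(e)=\Phi(\{.\mid 0\mid .\})$ for every $e\in I$, so that $\Phi$ genuinely certifies non--membership in $I$; (iii) evaluate $\Phi(G\plus H)$ and show it is separated from $\Phi(\{.\mid 0\mid .\})$ uniformly in $H$; and (iv) conclude that $G$ has no inverse. All the difficulty is concentrated in step (iii), because it must quantify over the infinite family of possible $H$, so $\Phi$ has to be strong enough to collapse that quantifier to a finite calculation.

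The step I expect to be the main obstacle---and, I suspect, the reason the statement is offered only as a conjecture---is ruling out the conjugate $-G$. For any impartial $G$ the game $-G$ is again impartial, and the mirroring strategy (answer each move in $G$ by the matching move in $-G$, and play any extra component on its own terms) makes the pair $G\plus -G$ contribute exactly $0$ points through an even number of moves, which strongly suggests $G\plus -G\approx\{.\mid 0\mid .\}$ and hence that \emph{every} impartial game is invertible. Any successful proof must therefore locate a game $G$, or a context in which it is summed, where this mirroring demonstrably fails---for instance because the long rule forces the mirroring player to open the pair at a moment when the final score is already decided---and must show that $\Phi$ detects exactly that failure. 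Pinning down such a failure, or else proving that it never occurs, is the crux on which the whole conjecture turns.
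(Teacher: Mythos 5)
Your overall framing is correct in one respect: the statement is only a conjecture, a genuine proof would have to rule out \emph{every} candidate inverse $H$ (not just $-G$), and neither your plan nor the paper accomplishes that. But the step you identify as the crux contains a concrete error, and it is precisely the point the paper settles. You claim that the mirroring strategy makes $G\plus -G$ contribute exactly $0$ points, ``which strongly suggests $G\plus -G\approx\{.\,|0|\,.\}$ and hence that every impartial game is invertible.'' This is wrong. Mirroring is a strategy available only to the \emph{second} player, so it establishes at most $(G\plus -G)^{SL}_F\leq 0$ and $(G\plus -G)^{SR}_F\geq 0$, i.e.\ $G\plus -G\in\pre\cup\ti$; it says nothing about whether the \emph{first} player can achieve a tie. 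Equivalence with the identity already requires the outcome $\ti$ in the empty context, since $G\plus -G\plus 0$ must have the same outcome as $0\in\ti$. The paper exhibits exactly the failure you said must be located: $G=\{2,\{1|2|3\}\,|\,0\,|\,-2,\{-3|-2|-1\}\}$ is impartial with $-G=G$, yet $G\plus G\in\pre$. If Left moves to $2\plus G$, Right declines the mirror move $-2$ and instead plays to $2\plus\{-3|-2|-1\}$; under the long rule Left must move again, to $2\plus -3$, final score $-1$. If Left moves to $\{1|2|3\}\plus G$, Right plays to $\{1|2|3\}\plus -2$, forcing $1\plus -2$. So the first player loses rather than ties, $G\plus G\not\approx 0$, and $-G$ is not an inverse of $G$. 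Your intuition fails because in scoring play a response need not be in the untouched component: the opponent can answer inside the component you just moved in, exploiting the score and the forced-move (parity) structure.

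On what remains: the paper, having killed the candidate $-G$ for this $G$, argues only heuristically that no other $H$ can be an inverse, and leaves the conjecture open---which is why it is stated as a conjecture. Your steps (i)--(iv), built around an invariant $\Phi$ that is constant on the identity set $I$ and provably separated from that value on $G\plus H$ uniformly in $H$, is a reasonable program for closing that gap, and is more ambitious than what the paper attempts. But as written it is only a program: you produce neither the invariant nor the candidate $G$, and your stated suspicion that mirroring proves every impartial game invertible would have steered you away from candidates like the paper's $G$, which is where the evidence for the conjecture actually lives.
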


To prove this one needs to show that given an impartial game $G$, for all impartial games $Y$ there is an impartial game $P$ such that $G\plus Y\plus P\not\approx P$.  This is very difficult to show, however it is extremely likely that this conjecture is true because for normal play games the inverse of any game $G$ is $-G$, and as we will now show there are impartial games $H$ where $-H$ is not the inverse.

So consider the game $G=\{2,\{1|2|3\}|0|-2,\{-3|-2|-1\}\}$, in this case $-G=G$.  If $G$ is the inverse of itself then $G\plus G\plus 0\approx 0$, in other words, $G\plus G\in \ti$.  However $G\plus G\in \pre$, this is easy to see since if Left moves first and moves to $2\plus G$, then Right can respond by moving to $2\plus \{-3|-2|-1\}$ and Left must move to $2\plus -3$ and loses.  If Left moves to $\{1|2|3\}\plus G$, then Right will move to $\{1|2|3\}\plus -2$ and Left must move to $1\plus -2$ and again loses.  Obviously the opposite will be true if Right moves first on $G\plus G$.  So $G\plus G\plus 0\not\approx 0$ and $G\plus G\not\in I$.

So, because $-G$ is not the inverse of $G$ in this case then it is very unlikely that any other impartial game could be $G$'s inverse, and while we do not have a proof of that, this simple example shows that it is probably true.

It is also worth noting that impartial scoring games can belong to any of the five outcomes for scoring games, i.e. $\lef, \ri, \pre, \n$ and $\ti$.  This is in stark contrast to both normal play and mis\'ere play games, where impartial games can only belong to either $\pre$ or $\n$.  

It is easy to see that this is true by considering an impartial game of the form $\{a|G^S|b\}$.  Clearly when $G^S=0$ then $b=-a$ and the outcome can only be $\n, \pre$ or $\ti$.  However we can set $G^S\neq 0$ and either large enough that both $a$ and $b$ are greater than zero, or less than zero, depending on if we make $G^S$ a very large positive or negative number.  In these cases the outcome will either be $\lef$ or $\ri$.

\section{Nim}

In the Introduction we gave the basic rules for nim played under normal and mis\`ere play.  For scoring play we will define the standard rules of nim a little differently;

\begin{enumerate}
\item{The initial score is $0$.}
\item{The game is played on heaps of beans, and on a players turn he may remove as many beans as he wishes from any one heap.}
\item{A player gets $1$ point for each bean he removes.}
\item{The player with the most points wins.}
\end{enumerate}

It should be clear that the best strategy for this game is simply to remove all the beans from the largest possible heap, and keep doing so until the game ends.  

Another thing to note is that, under normal play, for every single impartial game $G$ there is a nim heap of size $n$ such that $G=n$. This not the case with scoring play games, but as we will show in the next section, these games are still relatively easy to solve, regardless of the rules and of the scoring method.

\subsection{Scoring Sprague-Grundy Theory}\label{SGT}

Sprague-Grundy theory is a method that is used to solve any variation of a game of nim.  The function for normal play $\G(n)$ is defined in a such a way that if for a given heap $n$, played under some rules, if $\G(n)=m$ then this means that the original heap $n$ is equivalent to a nim heap of size $m$. 

For scoring play games this function is going to be defined slightly differently.  Rather than telling us equivalence classes of different games, it will tell us the final scores of games.  While this may not be as powerful as normal play Sprague-Grundy theory, it is still a very useful function and can be used to solve many different variations of scoring play nim.

One of the standard variations that have been used widely in books such as Winning Ways \cite{WW}, are a group of games called octal games.  These games cover a very large portion of nim variations, including all subtraction games.  For scoring games we will use the following definition;

\begin{definition}
A scoring play octal game $O=(n_1n_2\dots n_k, p_1p_2\dots p_k)$, is a set of rules for playing nim where if a player removes $i$ beans from a heap of size $n$ he gets $p_i$ points, $p_i\in \mathbb{R}$, and he must leave $a,b,c\dots$ or $j$ heaps, where $n_i=2^a+2^b+2^c+\dots+2^j$.
\end{definition}

By convention we will say that a nim heap $n\in O$ means that $n$ is played under the rule set $O$.  We will now define the function that will be the basis of our theory;

\begin{definition}
Let $n\in O=(t_1\dots t_f, p_1\dots p_f)$ and $m\in P=(s_1\dots s_e, q_1\dots q_e)$;

\begin{itemize}
\item{$\G_s(0)=0$.}
\item{$\G_s(n)=\max_{k,i}\{p_k-\G_s(n_1\plus n_2\plus \dots\plus n_{i})\}$, where $n_1+n_2+\dots +n_{i}=n-k$, $t_k=2^a+2^b+\dots 2^p$ and $i\in\{a,b,\dots, p\}$.}
\item{$\G_s(n\plus m)=\max_{k,i,l,j}\{p_k-\G_s(n_1\plus n_2\plus \dots\plus n_i\plus m),q_l-\G_s(n\plus m_1\plus m_2\plus\dots\plus m_j)\}$, where $n_1+n_2+\dots +n_i=n-k$, $t_k=2^a+2^b+\dots 2^p$ and $i\in\{a,b,\dots, p\}$, $m_1+m_2+\dots m_j=m-l$, $s_l=2^c+2^d+\dots +2^q$ and $j\in\{c,d,\dots, q\}$.}
\end{itemize}
\end{definition}

The first thing to prove is that this function gives us the information we want, namely the final score of a game.  So we have the following theorem;

\begin{theorem}\label{final}
$\G_s(n)=n^{SL}_F=-n^{SR}_F$ and $\G_s(n\plus m)=(n\plus m)^{SL}_F=-(n\plus m)^{SR}_F$.
\end{theorem}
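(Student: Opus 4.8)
The plan is to prove both halves of each identity simultaneously by induction on the total number of beans in the position, exploiting the fact that nim (and every octal game) is impartial in the sense of the preceding definition: from any position the set of available moves together with their point values is identical for both players. The engine of the argument is the observation that, for impartial scoring nim, the optimal net score accruing to the player who is about to move is independent of whether we call that player Left or Right; consequently the Left-first and Right-first values of any position differ only in sign. I will therefore carry the two statements $\G_s(H)=H^{SL}_F$ and $H^{SL}_F=-H^{SR}_F$ through the induction together, since each feeds the other.

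For the single-heap claim the base case is $n=0$: here $\G_s(0)=0$ by definition, and since the empty heap is terminal with no points yet scored we have $0^{SL}_F=0^{SR}_F=0$, so all three quantities agree. For the inductive step I would unfold the definition of the Left final score directly. When Left moves first on $n$ and legally removes $k$ beans, leaving heaps whose sizes sum to $n-k$ — that is, reaching a position $H=n_1\plus\dots\plus n_i$ permitted by the octal rule — Left immediately banks $p_k$ points, after which Right is to move first on $H$. The incremental score contributed by the remainder of the play is then exactly $H^{SR}_F$, so that
$$n^{SL}_F=\max_{k,i}\{p_k+H^{SR}_F\}.$$
Since $H$ contains $n-k<n$ beans, the inductive hypothesis gives $H^{SR}_F=-H^{SL}_F=-\G_s(H)$, and substituting turns the right-hand side into $\max_{k,i}\{p_k-\G_s(H)\}$, which is precisely the defining recursion for $\G_s(n)$. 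The symmetry half is obtained the same way: when Right moves first and removes $k$ beans the contribution to the (Left $-$ Right) score is $-p_k$, and Left then moves first on $H$, so $n^{SR}_F=\min_{k,i}\{-p_k+H^{SL}_F\}=-\max_{k,i}\{p_k-\G_s(H)\}=-\G_s(n)$. This simultaneously establishes $\G_s(n)=n^{SL}_F=-n^{SR}_F$ and propagates the symmetry up to level $n$.

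The disjunctive-sum statement follows by the identical induction, now on $n+m$ and with the maximiser (respectively minimiser) ranging over legal moves in both components. A move in the $n$-component contributes a term $p_k-\G_s(n_1\plus\dots\plus n_i\plus m)$ and a move in the $m$-component a term $q_l-\G_s(n\plus m_1\plus\dots\plus m_j)$; taking the overall maximum reproduces the defining recursion for $\G_s(n\plus m)$, while the long rule guarantees that the only terminal position is the one in which every heap is empty, matching the base case $\G_s=0$. The step that will require the most care is the justification of the symmetry $H^{SR}_F=-H^{SL}_F$: it is here that impartiality is genuinely used, since one must check that optimal play for the minimiser is the mirror image of optimal play for the maximiser and that every incremental score flips sign cleanly when the roles of the two players are exchanged. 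Once that symmetry is secured, the remainder of the argument is simply bookkeeping of signs inside the minimax recursion.
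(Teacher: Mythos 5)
Your proof is correct and follows essentially the same route as the paper: induction on the total number of beans, with the trivial all-empty base case, unfolding the minimax definition of the final score and using the inductive hypothesis to replace the options' final scores by their $\G_s$ values. If anything, your write-up is more careful than the paper's, since you make explicit the sign-flip $H^{SR}_F=-H^{SL}_F$ that justifies passing from $n^{SL}_F=\max_{k,i}\{p_k+H^{SR}_F\}$ to the defining recursion $\max_{k,i}\{p_k-\G_s(H)\}$, and you explicitly propagate that symmetry up to level $n$, steps which the paper's proof leaves implicit.
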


\begin{proof}  The proof of this will be by induction on all heaps $n_1,n_2,\dots, n_i,m_1\dots,m_j$, such that $n_1+n_2\dots + n_i, m_1+\dots +m_j\leq K$ for some integer $K$, the base case is trivial since $\G_s(0\plus 0\plus 0\dots \plus 0)=0$ regardless of how many $0$'s there are.

So assume that the theorem holds for all $n_1,n_2,\dots, n_i,m_1\dots,m_j$, such that $n_1+n_2\dots + n_i, m_1+\dots +m_j\leq K$ for some integer $K$, and consider $\G_s(n\plus m)$, where $n+m=K+1$.

$\G_s(n\plus m)=\max_{k,i,l,j}\{p_k-\G_s(n_1\plus n_2\plus \dots\plus n_i\plus m),q_l-\G_s(n\plus m_1\plus m_2\plus\dots\plus m_j)\}$, but since $n_1+n_2\dots+n_i+m$ and $n+m_1+m_2\dots+m_j\leq K$, then by induction $\max_{k,i,l,j}\{p_k-\G_s(n_1\plus n_2\plus \dots\plus n_i\plus m),q_l-\G_s(n\plus m_1\plus m_2\plus\dots\plus m_j)\}= \max_{k,i,l,j}\{p_k-(n_1\plus n_2\dots\plus n_i\plus m)^{SL}_F, q_l-(n\plus m_1\plus\dots m_j)^{SL}_F\}=(n\plus m)^{SL}_F$, and the theorem is proven.
\end{proof}

\subsubsection{Subtraction Games}

Subtraction games are a very widely studied subset of octal games.  A subtraction game is a game of nim where there is a pre-defined set of integers and a player may only remove those numbers of beans from a heap.  This set is called a subtraction set.  From our definition of an octal game this means that each $n_i$ is either $0$ or $3$.  In this section we will also say that if a player removes $i$ beans then he gets $i$ points.

\begin{lemma}
Let $S$ be a finite subtraction set, then for all $s\in S$, $\G_s(s+2ik)=k-\G_s(s+(2i-1)k)$ for all $i\in\mathbb{N}$, where $k=\max\{S\}$.
\end{lemma}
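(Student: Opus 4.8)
The plan is to work with the specialisation of the scoring Sprague--Grundy recursion to a subtraction set $S$. Since each $n_i\in\{0,3\}$ and $3=2^0+2^1$, a legal move removes some $s'\in S$ beans and must leave a single heap (or none), scoring $p_{s'}=s'$ points. Hence for a single heap the recursion collapses to
\[
\G_s(n)=\max_{s'\in S,\;s'\le n}\{\,s'-\G_s(n-s')\,\},
\]
with $\G_s(n)=0$ whenever $n<\min S$. By Theorem \ref{final} this is exactly the final score with the mover maximising, so the lemma is the assertion that, at a heap of size $s+2ik$, removing the maximal amount $k=\max\{S\}$ is an optimal first move.

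First I would dispose of the easy inequality. Because $i\ge 1$ and $s\ge 1$ we have $s+2ik\ge k$, so removing $k$ beans is legal and yields the option value $k-\G_s(s+(2i-1)k)$; therefore $\G_s(s+2ik)\ge k-\G_s(s+(2i-1)k)$. The whole content of the lemma is the reverse inequality, namely that no smaller removal does better: for every $s'\in S$,
\[
s'-\G_s(s+2ik-s')\le k-\G_s\big(s+(2i-1)k\big).
\]
Writing $u=k-s'\ge 0$ and using $s+2ik-s'=s+(2i-1)k+u$, this is equivalent to the one-sided Lipschitz bound
\[
\G_s\big(s+(2i-1)k+u\big)\ \ge\ \G_s\big(s+(2i-1)k\big)-u \qquad (u=k-s',\ s'\in S).
\]

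I would prove this by induction on $i$, carried together with the natural closed form it suggests. Using the elementary global bounds $0\le \G_s(n)\le k$ (provable by a short simultaneous induction from the recursion: the choice $s'=k$ gives $\G_s(n)\ge k-\G_s(n-k)\ge 0$ for $n\ge k$, while every option satisfies $s'-\G_s(n-s')\le s'\le k$), I would show by induction on the level $j$ that removing $k$ is optimal at every heap $s+jk$, which telescopes to
\[
\G_s(s+2ik)=\G_s(s),\qquad \G_s\big(s+(2i-1)k\big)=k-\G_s(s).
\]
The lemma is then immediate. Each inductive step reduces, via the recursion and the induction hypothesis at the previous level, to controlling $\G_s$ at the interior heaps $s+(2i-1)k+u$ with $0<u<k$, which is where the one-sided bound above must be fed back in; I would therefore strengthen the induction to carry both a lower and an upper Lipschitz-type estimate at the points $s+jk$ simultaneously.

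The main obstacle is exactly this interior control. The inequality $\G_s(n+u)\ge \G_s(n)-u$ is false at generic heaps --- for instance for $S=\{2,3\}$ one has $\G_s(5)=1<\G_s(4)-1=2$ --- so the argument cannot appeal to any global monotonicity and must exploit the specific residue $s+2ik$ (an even multiple of $k$ offset by an element of $S$). Concretely, the difficulty is the initial transient: the recursion is only eventually periodic, and one must show that every heap of the form $s+2ik$ with $i\ge 1$ lies beyond the transient (equivalently, that the period-$2k$ behaviour has already locked in with the correct phase at these residues) before the telescoping identity can be applied. Bounding the length of this transient, and proving that removing $k$ is forced precisely at the residues $s$ and $s+k\ (\mathrm{mod}\ 2k)$ for $s\in S$, is the step I expect to require the most care.
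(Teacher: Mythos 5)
Your setup is sound: the single-heap specialisation of the recursion, the observation that the inequality $\G_s(s+2ik)\ge k-\G_s(s+(2i-1)k)$ is immediate, the reformulation of the reverse inequality as $\G_s(s+(2i-1)k+u)\ge \G_s(s+(2i-1)k)-u$ with $u=k-s'$, and the target closed form $\G_s(s+2ik)=s$, $\G_s(s+(2i-1)k)=k-s$ all match the true structure of the problem (and the paper's proof). But the proposal stops exactly where the proof has to start: you explicitly defer the ``interior control'' step, and the mechanism you sketch for it --- bounding an initial transient and arguing that the period-$2k$ behaviour has ``locked in'' at the residues $s$ and $s+k$ --- is not a proof and points in the wrong direction. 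There is no transient at these residues: the identities hold exactly for every $i\ge 1$ (indeed $\G_s(s)=s$ and $\G_s(s+k)=k-s$ already), and any appeal to eventual periodicity with period $2k$ is circular, since the period and phase at these residues is precisely what the lemma asserts. Transients occur only at residues \emph{not} in $S$ (the paper's $\{4,5\}$ example, where $\G_s(13)=3$ while $\G_s(3)=0$), which is also why your proposed strengthening --- Lipschitz-type estimates carried only at the points $s+jk$ with $s\in S$ --- cannot close the induction: the options of a heap $s+jk$ land at residues outside $S$, about which your hypothesis says nothing.

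The missing idea is to strengthen the induction to \emph{all} residues, with one-sided bounds whose direction alternates with the parity of the level: for every $0\le r\le k$ and every $i$, (i) $\G_s(r+2ik)\le r$ and (ii) $\G_s(r+(2i+1)k)\ge k-r$. These are the paper's Parts 1 and 2, each proved by a short induction on $i$ using $\G_s\ge 0$. Bound (ii) is exactly your one-sided Lipschitz bound where you need it: for $s'>s$ the option $s+2ik-s'$ has residue $s+k-s'\in[0,k)$ at an odd level, and (ii) gives $\G_s\ge s'-s$, which is the required estimate; for $s'\le s$ non-negativity of $\G_s$ suffices. With (i) and (ii) in hand, the exact values at residues in $S$ follow by a second induction (the paper's Part 3), which needs one further trick that your plan lacks: to prove the odd-level upper bound $\G_s(s+(2i+1)k)\le k-s$ against a move $j>s$, one bounds the option's value $\G_s((s+k-j)+2ik)$ from \emph{below} by $s-\G_s((k-j)+2ik)$ (i.e.\ by making the legal move of size $s$ from it) and then applies (i) to $\G_s((k-j)+2ik)$; the bound (i) alone goes the wrong way here. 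As a minor additional point, your argument for the global bound $0\le\G_s(n)\le k$ only establishes non-negativity for $n\ge k$; for $\min S\le n<k$ one should take the largest legal move $s'\le n$ and use $\G_s(n-s')\le\max\{s\in S : s\le n-s'\}\le s'$.
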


\begin{proof}  We will split the proof of this into three parts;

\noindent\textbf{Part 1}: For all $i\in\mathbb{Z}^+$, $\G_s(r+2ik)\leq r$

The first thing to show is that for each $0\leq r\leq k$, $\G_s(r)\leq r$ and $\G_s(r+2ik)\leq r$ for all $i\in\mathbb{Z}^+$.  First let $r\leq k$, $\G_s(r)=\max_j\{j-\G_s(r-j)\}$ and since each $j$ in the set is less than or equal to $r$, and each $\G_s(r-j)\geq 0$, this implies that $\G_s(r)\leq r$.

Next let $\G_s(r+2ik)\leq r$ for smaller $i$, and consider $\G_s(r+2ik)=\max_j\{j-\G_s(r+2ik-j)\}$.  If $j\leq r$, then since $\G_s(r+2ik-j)\geq 0$, we have $j-\G_s(r+2ik-j)\leq j\leq r$.  If $j>r$, then $\G_s(r+2ik-j)=\G_s(r+k-j+(2i-1)k)\geq k-(r+k-j)=j-r$, by induction, therefore $j-\G_s(r+2ik-j)\leq j-(j-r)=r$.  So therefore $\G_s(r+2ik)\leq r$ for all $i$.

\noindent\textbf{Part 2}: For all $i\in\mathbb{Z}^+$, $\G_s(r+(2i+1)k)\geq k-r$

We also need to show that for each $0\leq r\leq k$, $\G_s(r+(2i+1)k)\geq k-r$ for all $i\in \mathbb{N}$.  Clearly $\G_s(r+k)\geq k-\G_s(r)\geq k-r$.  Again let $\G_s(r+(2i+1)k)\geq k-r$ for smaller $i$, then $\G_s(r+(2i+1)k)\geq k-\G_s(r+2ik)$ and from above we know that $\G_s(r+2ik)\leq r$ and hence $\G_s(r+(2i+1)k)\geq k-\G_s(r+2ik)\geq k-r$ for all $i$.

\noindent\textbf{Part 3}: For all $s\in S$ and $i\in\mathbb{Z}^+$, $\G_s(s+2ik)\geq s$ and $\G_s(s+(2i+1)k)\leq k-s$.

Let $s\in S$, then $\G_s(s)\geq s-\G_s(0)=s$, since we know from part 1 that $\G_s(s)\leq s$, this means that $\G_s(s)=s$.  So consider $\G_s(s+k)=\max_j\{j-\G_s(s+k-j)\}$, if $j\leq s$ then $j-\G_s(s+k-j)\leq j-k+\G(s-j)\leq j-k+s-j\leq s-k\leq k-s$.  If $j>s$ then $j-\G_s(s+k-j)\leq j-s+\G_s(k-j)\leq j-s+k-j=k-s$.  From part 2 we know that $\G_s(s+k)\geq k-\G_s(s)=k-s$, so $\G_s(s+k)=k-s$.

So assume that the theorem holds up to $i\geq 1$, and consider $\G_s(s+(2i+1)k)=\max_j\{j-\G_s(s+(2i+1)k-j)\}$.  If $j\leq s$ then  $j-\G_s(s+(2i+1)k-j)\leq j-k+\G_s(s+2ik-j)$, and from part 2 we know that $\G_s(s+2ik-j)\leq s-j$ therefore $j-k+\G_s(s+2ik-j)\leq j-k+s-j\leq s-k\leq k-s$.  

If $j>s$ then $j-\G_s(s+(2i+1)k-j)=j-\G_s(s+k+2ik-j)\leq j-s+\G_s(k-j+2ik)\leq j-s+k-j$, by induction, which is equal to $k-s$.

Finally consider $\G_s(s+(2i+2)k)\geq k-\G_s(s+(2i+1)k)$, and from before we know that $\G_s(s+(2i+1)k)\leq k-s$, therefore $k-\G_s(s+(2i+1)k)\geq k-(k-s)=s$.  So therefore $\G_s(s+(2i+2)k)=s$ and the lemma is proven.
\end{proof}

The obvious question to ask is does the lemma hold for all $n$?  The answer is no.  While it is clear that our function is eventually periodic for subtraction games at least, there are many examples where simply taking the largest number of beans, as in the lemma, is not always the best move.  For example consider a game with subtraction set $\{4,5\}$.  The table of this games $G_s(n)$ values are given in table \ref{o1}.

\begin{table}[htb]
\begin{center}
\begin{tabular}{|c|c|c|c|c|c|c|c|c|c|c|c|c|c|c|c|c|}
\hline
$n$&0&1&2&3&4&5&6&7&8&9&10&11&12&13&14&15\\\hline
$\G_s(n)$&0&0&0&0&4&5&5&5&5&1&0&0&0&3&4&5\\\hline
\end{tabular}
\end{center}
\caption{A game with subtraction set $\{4,5\}$.}\label{o1}
\end{table}

So in particular consider the value of $\G_s(13)$, this is $\max\{4-\G_s(9), 5-\G_s(8)\}=4-\G_s(9)=3$.  Therefore for this game taking 4 beans and gaining 4 points is preferable to taking 5 beans and gaining 5 points.  This is a very simple example to illustrate the point that we cannot say playing greedily would always work.  In other words we need to show that if $n$ is large enough then taking the largest number of beans available \emph{is} the best strategy.  So we make the following conjecture;

\begin{conjecture}
Let $S$ be a finite subtraction set, then there exists an $N$ such that $\G_s(n+2k)=\G_s(n)$ for all $n\geq N$, where $k=\max\{S\}$.
\end{conjecture}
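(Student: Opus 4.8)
The plan is to treat the sequence $(\G_s(n))_{n\ge 0}$ as the orbit of a deterministic finite-state system, extract eventual periodicity from it, and then try to force the period to be exactly $2k$ using the preceding Lemma.

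First I would record two elementary facts about $\G_s$. Since every $p_i=i$ is an integer, an easy induction on the recurrence $\G_s(n)=\max_{s\in S,\,s\le n}\{s-\G_s(n-s)\}$ (the specialisation of the defining recursion to a subtraction set) shows that $\G_s(n)\in\mathbb{Z}$ and that $0\le \G_s(n)\le k$ for all $n$: the upper bound is immediate because each $\G_s(n-s)\ge 0$, and for $n\ge k$ the lower bound follows from the single move $s=k$, which gives $\G_s(n)\ge k-\G_s(n-k)\ge 0$, the remaining cases $n<k$ reducing to the same statement for a subtraction set of smaller maximum. Consequently the ``window'' $W(n)=(\G_s(n-1),\dots,\G_s(n-k))$ lives in the finite set $\{0,1,\dots,k\}^{k}$, and the recurrence expresses $\G_s(n)$, hence $W(n+1)$, as a function of $W(n)$ alone. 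Thus $n\mapsto W(n)$ is the orbit of a deterministic map on a finite set, so by the pigeonhole principle there are integers $N_0$ and $p\ge 1$ with $W(n+p)=W(n)$, and therefore $\G_s(n+p)=\G_s(n)$, for all $n\ge N_0$. This already yields eventual periodicity with \emph{some} period $p$.

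The second step reduces the target period $2k$ to a single window. If for some $N$ one can show $\G_s(n+2k)=\G_s(n)$ for the $k$ consecutive arguments $n=N,N+1,\dots,N+k-1$, then the full conclusion follows by induction: for $n\ge N+k$ every $n-s$ with $s\in S$ satisfies $N\le n-s<n$, so $\G_s(n+2k)=\max_{s}\{s-\G_s((n-s)+2k)\}=\max_{s}\{s-\G_s(n-s)\}=\G_s(n)$. Hence it suffices to produce one block of $k$ consecutive period-$2k$ relations. The Lemma already supplies such relations on the ``anchor'' residues: for all large $n$ it forces $\G_s(n)=s$ when $n\equiv s\pmod{2k}$ and $\G_s(n)=k-s$ when $n\equiv s+k\pmod{2k}$, for each $s\in S$, and on every such class $\G_s$ is visibly $2k$-periodic. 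Taking $s=k$, the maximal value $k$ is attained at all large $n\equiv k\pmod{2k}$ and the value $0$ at all large $n\equiv 0\pmod{2k}$. To upgrade the generic period $p$ to $2k$ I would pin the extreme values rigidly: directly from the recurrence, for $n\ge k$ one has $\G_s(n)=k$ if and only if $\G_s(n-k)=0$ (since $s-\G_s(n-s)=k$ with $s\le k$ and $\G_s(n-s)\ge 0$ forces $s=k$ and $\G_s(n-k)=0$), while $\G_s(n)=0$ forces $\G_s(n-k)=k$ and hence $\G_s(n-2k)=0$. Thus the zero-set and the max-set are locked together by $n\mapsto n-k$ and stable under $n\mapsto n-2k$; combined with the $p$-periodicity from the first step this should force the minimal period to divide $2k$ and, with the Lemma's anchoring, let one fill a complete window of period-$2k$ relations.

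The hard part, and the reason this is only a conjecture, is exactly this last bridge. The pigeonhole argument controls nothing beyond the bare existence of a period $p$, bounded only by $(k+1)^{k}$, and gives no arithmetic link between $p$ and $2k$; meanwhile the Lemma pins $\G_s$ only on the residues $\{s,\,s+k \bmod 2k : s\in S\}$, which for a set such as $S=\{4,5\}$ (where $k=5$) leaves the residues $1,2,3,6,7,8\pmod{10}$ completely undetermined, and on those residues the values are genuinely data-dependent (the observed ramp $1,2,3,4,3,2$). Showing that the deterministic recurrence cannot sustain, on these uncontrolled residues, any periodic pattern whose period fails to divide $2k$ is the crux: the rigidity of the extreme values is suggestive but does not obviously propagate inward to the intermediate residues, and making that propagation rigorous is where I expect the real work, and the real difficulty, to lie.
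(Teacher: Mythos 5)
First, a point of orientation: the statement you were asked to prove is presented in the paper as an \emph{open conjecture}. The paper offers no proof of it, and indeed says explicitly that ``a proof of the conjecture or a counter example would be a very big step forward.'' So there is no paper argument to compare yours against; the only question is whether your proposal settles the problem, and by your own admission it does not. What you do establish correctly is this: the bounds $0\le\G_s(n)\le k$ and integrality; that for $n\ge k$ the value $\G_s(n)$ is a deterministic function of the window $(\G_s(n-1),\dots,\G_s(n-k))\in\{0,\dots,k\}^k$, so pigeonhole gives eventual periodicity with \emph{some} period $p\le(k+1)^k$; the reduction of the conjecture to exhibiting $k$ consecutive indices with $\G_s(n+2k)=\G_s(n)$; and the rigidity facts $\G_s(n)=k\iff\G_s(n-k)=0$ and $\G_s(n)=0\Rightarrow\G_s(n-2k)=0$. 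But note that the eventual-periodicity step is precisely what the paper already regards as clear (it makes the same bounded-values, finite-memory argument for taking-no-breaking games in general), so the entire content of the conjecture is the step you leave open: that the eventual period divides $2k$.

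The gap is therefore not a technical detail to be filled in later; it is the conjecture itself, and the tools you propose for the bridge demonstrably fall short of it. The locking of the zero-set and the max-set is one-directional: $\G_s(n-k)=k$ does \emph{not} imply $\G_s(n)=0$, since some move other than taking $k$ may score positively. The zero-set is closed only under $n\mapsto n-2k$, not under $n\mapsto n+2k$; the paper's own $\{4,5\}$ table shows this, with $\G_s(3)=0$ but $\G_s(13)=3$. Consequently, in the eventually periodic regime with period $p$, everything you have proved amounts to saying that the residues of $\mathbb{Z}_p$ carrying the values $0$ and $k$ are unions of cosets of the subgroup generated by $\gcd(2k,p)$, exchanged by translation by $k$ and containing the coset of $0$ -- constraints that are perfectly consistent with a minimal period that fails to divide $2k$. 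The residues not anchored by the Lemma are exactly where the paper reports the delicate behaviour (values that ``switch'' several times before settling down), and nothing in your proposal constrains them. An honest strategy outline, then, but the crux you flag at the end is the whole problem, and it remains open.
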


It seems plausible that this conjecture is true, given the lemma, however it is also possible that there is an $n$ such that $\G_s(n+2ik)=J$ and $\G_s(n+(2i+1)k)=k-j$, where $J>j$.  What we have seen from the data is that often if $n\not\in S$ the values of $\G_s(n+2ik)$ and $\G_s(n+(2i+1)k)$ will alternate as in the lemma, but then you will reach an $i$ where the values change, and this switch might happen several times before it settles down.

A proof of the conjecture or a counter example would be a very big step forward in understanding how the function operates. 

\subsubsection{Taking-no-Breaking Games}

Taking-no-breaking games are a more general version of subtraction games, and cover a fairly wide range of octal games.  The rules of these games are fairly basic, when a player removes a certain number of beans from a heap, he will have one of three options.

\begin{enumerate}
\item{Leave a heap of size zero, i.e. remove the entire heap.}
\item{Leave a heap of size strictly greater than zero.}
\item{Leave a heap of size greater than or equal to zero.}
\end{enumerate}

From the definition of an octal game this means that each $n_i$ is either $0,1,2$ or $3$, also an octal game $O=(n_1n_2\dots n_k, p_1p_2\dots p_k)$ is finite if $k$ is finite.

It should be clear that for a fixed $m\in P$ and finite $O$, where $P$ and $O$ are two taking no breaking games, then the function $\G_s(n\plus m)$ must always be eventually periodic.  The reason is that we always compute each value from a finite number of previous values, and since $O$ is finite this implies that $\G_s(n\plus m)$ is bounded, and both of these facts together mean that the function will be eventually periodic.

The real question that one needs to answer however is not ``is it periodic?'', but ``what is the period?''.  We believe we can answer that question for a particular class of taking-no-breaking games, that is the class of games where if you remove $i$ beans you get $i$ points.  We make the following conjecture;

\begin{conjecture}\label{period}
Let $O=(n_1n_2\dots n_t, p_1p_2\dots p_t)$ and $P=(m_1m_2\dots m_l, q_1q_2\dots q_l)$ be two finite taking-no-breaking octal games such that, there is at least one $n_s\neq 0$ or $1$, and if $n_i$ and $m_j=1,2$ or $3$ then $p_i=i$ and $q_j=j$, and $p_i=q_j=0$, otherwise, then;

$$\G_s(n+2k\plus m)=\G_s(n\plus m)$$

\noindent where $O$ is finite and $k$ is the largest entry in $O$ such that $n_k\neq 0,1$.
\end{conjecture}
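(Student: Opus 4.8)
The plan is to first reduce the statement to a tractable regime and then to mirror, for the summed game, the three-part argument of the preceding Lemma. Since $O=(n_1\dots n_t,p_1\dots p_t)$ is finite, for any heap $n>t$ the only legal moves on the $n$-component are the ``leave a heap'' removals, i.e. removals of $i$ beans with $n_i\in\{2,3\}$; removing the whole heap (octal digit $1$) is impossible, as that would require $i=n>t$. Writing $T=\{i:n_i\in\{2,3\}\}$, so that $k=\max T$, this means that for $n>t$ the value $\G_s(n\plus m)$ obeys exactly the recursion of the subtraction game with subtraction set $T$ summed with the fixed component $m$. I would therefore work entirely in the range $n>t$, where the $n$-component is a genuine subtraction game and the preceding Lemma applies directly.

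Next I would run the induction that the Lemma uses, but carried over the whole finite family of positions reachable from $m$. Because every $m$-move strictly decreases the second heap, only finitely many heaps $m'$ occur below $m$, so I would prove the required bounds simultaneously for all of the functions $f_{m'}(n):=\G_s(n\plus m')$ by induction on $n$ (and, within a fixed $n$, on $m'$). The greedy lower bound is immediate: for $n>k$ the move taking $k$ beans from the $n$-component is legal, so by Theorem~\ref{final} and the definition of $\G_s$ one has $\G_s(n\plus m')\ge k-\G_s((n-k)\plus m')$, i.e. $\G_s(n\plus m')+\G_s((n-k)\plus m')\ge k$. Feeding this into itself across two steps yields the analogues of Parts~1 and~2 of the Lemma for the summed game, namely one-sided bounds on $\G_s(r+2ik\plus m')$ and on $\G_s(r+(2i+1)k\plus m')$ for each residue $r$; the only genuinely new cases in the induction are the moves played inside the $m'$-component, which land in the column $f_{m''}$ for some $m''$ reachable from $m'$ and are controlled by the simultaneous inductive hypothesis.

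It then remains to upgrade these one-sided bounds to exact, repeating values, and this is where the real difficulty lies. The matching upper bound is precisely the statement that, for $n$ large, taking the maximal $k$ from the $n$-component is an optimal first move; equivalently, that no $n$-move of size $i<k$ and no $m'$-move can beat $k-\G_s((n-k)\plus m')$. As the value $\G_s(13)=3$ in the $\{4,5\}$ example shows, this can genuinely fail for moderate $n$ and for residues $r\notin T$, which is exactly the ``the switch might happen several times before it settles down'' behaviour noted above. My approach would combine two facts. First, the paper already grants that for fixed $m$ the function $\G_s(\,\cdot\plus m)$ is eventually periodic with \emph{some} period, so each residue subsequence $i\mapsto\G_s(r+2ik\plus m')$ is itself eventually periodic. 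Second, I would show that each such subsequence is eventually monotone and bounded, hence, being integer valued, eventually constant; once every even and odd residue subsequence is eventually constant, $\G_s(n+2k\plus m)=\G_s(n\plus m)$ holds for all $n$ beyond the largest stabilisation point, which supplies the needed threshold.

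The crux, and the step I expect to resist a short proof, is establishing that eventual monotonicity. The greedy bound gives $a_i+b_{i-1}\ge k$ and $a_i+b_i\ge k$ for the even terms $a_i=\G_s(r+2ik\plus m')$ and odd terms $b_i=\G_s(r+(2i+1)k\plus m')$, but turning these into $a_{i+1}\ge a_i$ requires the complementary upper bound $b_i\le k-a_i$, which is the greedy-optimality one is trying to prove in the first place. I would attempt to break this circularity with an amortised argument that bounds the total number of residues at which a non-greedy move can strictly win, using that every such ``win'' consumes a fixed amount of the bounded quantity $\G_s$ and that the coupling to the finitely many $m'$-columns can only redistribute, not create, such advantage. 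Making this counting argument rigorous, so that it survives the interaction between the $n$- and $m$-components, is the principal obstacle, and it is the same obstacle that leaves the one-component statement (the conjecture preceding this one) open.
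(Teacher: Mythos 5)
You are attempting to prove Conjecture \ref{period}, which the paper deliberately leaves open; there is no proof in the paper to compare against, and the paper itself identifies exactly why one is hard to produce: $\G_s$ is defined only as a maximum, so lower (greedy) bounds come cheaply, but the matching upper bound --- that for large $n$ no removal of $i<k$ beans and no move on the $m$-component beats the greedy move --- is precisely the difficulty. Your proposal does not close this gap, and you concede as much: the entire argument funnels into the claim that each residue subsequence $i\mapsto \G_s(r+2ik\plus m')$ is eventually monotone, and you admit that proving $a_{i+1}\ge a_i$ requires the complementary bound $b_i\le k-a_i$, which is the greedy-optimality statement you set out to establish. The proposed escape --- an ``amortised'' count of residues at which a non-greedy move strictly wins --- is only gestured at: no invariant is defined, no quantity is shown to be consumed or to decrease, and nothing in it rules out the switching behaviour the paper reports (sequences that alternate as in the Lemma, then switch, possibly several times, before settling). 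So what you have is a plausible plan of attack organised around the paper's own Lemma, not a proof; its crux is the same open obstacle the paper describes.

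Two secondary points would also need repair. First, the opening reduction overstates what the Lemma gives you: for $n>t$ the recursion does coincide with that of the subtraction game on $T=\{i: n_i\in\{2,3\}\}$, but the recursion reaches down to heaps of size at most $t$, where digit-$1$ and exact-emptying moves reappear and the boundary values differ from those of a genuine subtraction game; hence the Lemma does not ``apply directly'' and its three-part induction would have to be rerun from these altered base cases. Second, even in the one-heap case the Lemma pins down exact values only at residues $s\in S$ and gives one-sided bounds elsewhere --- the $\{4,5\}$ example with $\G_s(13)=3$ shows that exactness genuinely fails off the subtraction set --- so a simultaneous induction over the finitely many $m'$-columns, however carefully organised, cannot be expected to yield exact repeating values where the one-component theory itself does not.
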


There is very strong evidence that this conjecture will hold.  Since $m$ is a constant it changes the value of $\G_s(n\plus m)$, but not the period.  We have checked the theorem for many examples and not yet found a counter example, which suggests that it is probably true.

Unfortunately proving it is surprisingly difficult.  The conjecture basically says that if $n$ is large enough, then your best move is to simply remove the maximum available beans from the heap $n$, so a proof would need to show that for any given $m$, there are only finitely many places where moving on $m$ or removing fewer than $k$ beans from $n$ is a better move.

There are several problems with this, the first is that the function $\G_s(n\plus m)$ only tells us the maximum possible value from the set of possible values.  This makes it very difficult to do a proof that first shows $\G_s(n+2k\plus m)\geq \G_s(n\plus m)$ and vice-versa.  The second is understand \emph{why} removing a lower number of beans would be better than playing greedily in some instances.

The last problem is induction is hard because what may hold for lower values may not hold at higher values, making a proof by induction difficult.  However since the function is recursively defined an inductive proof seems to be more natural than a deductive proof.

We believe that a proof of this theorem would also help in finding the period, and proving it for the more general case, where $i$ beans are worth $k$ points, $k\in \mathbb{R}$.

Of course it is natural to ask what happens in the general case, unfortunately in the general case the conjecture doesn't hold.  To see why consider the game $O=(3333, 2222)$.  The values of $G_s(n)$ are given in the following table;

\begin{table}[htb]
\begin{center}
\begin{tabular}{|c|c|c|c|c|c|c|c|c|c|c|c|}
\hline
$n$&0&1&2&3&4&5&6&7&8&9&10\\\hline
$\G_s(n)$&0&2&2&2&2&0&2&2&2&2&0\\\hline
\end{tabular}
\end{center}
\end{table}

This game has period 5, which does not correspond to a possible value of $k$, i.e. 1,2,3 or 4.    While all taking-no-breaking games are periodic as we can see from the example, it is not clear what the period is, since we can take our $p_i$'s to be any real number.  So we make the following conjecture;

\begin{conjecture}Let $O=(n_1n_2\dots n_t, p_1p_2\dots p_k)$ and $P=(m_1m_2\dots m_l, q_1q_2\dots q_l)$ be two finite taking-no-breaking octal games, then there exists a $t$ such that;

$$\G_s(n+t\plus m)=\G_s(n\plus m)$$

\end{conjecture}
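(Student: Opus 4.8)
The plan is to reduce the statement to a pigeonhole argument on a finite‑state deterministic recurrence, isolating boundedness of $\G_s$ as the one genuinely hard ingredient.

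First I would fix the heap $m\in P$ once and for all. Since $P$ is a finite taking‑no‑breaking game and $m$ is a single finite heap, the game tree rooted at $m$ visits only finitely many residual heap sizes; call this finite set $M_0\subseteq\{0,1,\dots,m\}$. For each $\mu\in M_0$ define the one‑variable sequence $f_\mu(n)=\G_s(n\plus\mu)$. Writing $d$ for the largest number of beans removable from a heap under $O$ (finite, as $O$ is finite), the recursion for $\G_s(n\plus m)$ unwinds into a coupled system: $f_\mu(n)$ is the maximum of finitely many terms $p_k-f_\mu(n-k')$ with $1\le k'\le d$ (moves in the $n$‑component, which leave $\mu$ untouched) and terms $q_l-f_{\mu'}(n)$ with $\mu'\in M_0$, $\mu'<\mu$ (moves in the $m$‑component, which leave $n$ untouched). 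Ordering $M_0$ by size, $f_\mu(n)$ is therefore determined by the $d$ preceding values $f_\mu(n-1),\dots,f_\mu(n-d)$ together with the strictly smaller $f_{\mu'}(n)$. Collecting these into a single state vector $\mathbf{V}_n=\big(f_\mu(n-j)\big)_{\mu\in M_0,\,0\le j<d}$, the map $\mathbf{V}_n\mapsto\mathbf{V}_{n+1}$ is deterministic. Hence, \emph{provided the set of attained state vectors is finite}, the sequence $(\mathbf{V}_n)$ must eventually repeat, and as soon as one state recurs the whole system is periodic from that point on, with a single period $t$ shared by every $f_\mu$ --- in particular by $f_m(n)=\G_s(n\plus m)$ for all sufficiently large $n$. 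This is exactly the conjectured $t$.

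Next I would reduce finiteness of the state space to finiteness of the value set of $\G_s$, which splits into two requirements: (i) \emph{boundedness}, i.e.\ $|\G_s(n\plus m)|$ stays below a constant independent of $n$; and (ii) \emph{discreteness}, i.e.\ the values lie in a discrete (hence, together with (i), finite) subset of $\mathbb{R}$. Requirement (ii) is automatic when the scores are integers or, more generally, commensurable, since every $\G_s$‑value is an integer combination of the finitely many $p_k$ and $q_l$ and therefore lands in a fixed finitely generated, discrete subgroup of $(\mathbb{R},+)$; I would either restrict to this case or supply a separate density argument, and I would flag in the write‑up that for genuinely incommensurable real scores (ii) is delicate and the sequence could in principle fail to be exactly periodic.

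The main obstacle is requirement (i), boundedness, which is where the real work lies and which the informal remark preceding the conjecture simply asserts. The difficulty is structural: from the recursion one only extracts relations such as $\G_s(n\plus m)+\G_s((n-k^\ast)\plus m)=p_{k^\ast}$ for the locally optimal move $k^\ast$, and chaining these controls the value only up to an additive $2\max_k|p_k|$ every couple of moves, which a priori permits linear growth. To rule this out I would argue by strong induction that any ``lead'' the mover can build is answerable: formally, I expect to exhibit a response strategy (a mirroring of the opponent's moves, in the spirit of the identity‑element argument used in the monoid theorem above) showing $\G_s(n\plus m)$ cannot exceed the best single‑move gain by more than a fixed amount, and symmetrically from below. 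Equivalently one may invoke the known eventual periodicity of tropical (max‑plus) recurrences with integer data, but the boundedness input to that machinery is precisely what must be established first. I therefore anticipate that essentially all the effort goes into a clean uniform bound on $\G_s$, after which the pigeonhole step of the first paragraph delivers the period $t$ immediately.
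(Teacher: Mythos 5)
First, a point of reference: the paper does not prove this statement --- it is left as a conjecture, and the only support offered is the informal remark earlier in the section that $\G_s(n\plus m)$ is ``computed from a finite number of previous values'' and ``bounded'', hence eventually periodic. Your state-vector construction is a correct and careful formalisation of exactly that remark, and your main contribution is to isolate where it breaks: the pigeonhole step needs the set of attainable state vectors to be \emph{finite}, and bounded real values do not give finiteness. That identification is right, but it also means your proposal does not prove the conjecture as stated. Your requirement (ii) restricts to commensurable scores, whereas the conjecture explicitly allows arbitrary $p_i,q_l\in\mathbb{R}$; indeed the reason the paper demotes this from theorem to conjecture is precisely that ``we can take our $p_i$'s to be any real number.'' Under commensurability your pigeonhole argument goes through (granting boundedness), but that is a strictly weaker statement than the one posed. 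In the dense-value-group case, bounded-plus-finite-memory yields nothing, and a proof would have to show instead that the sequence of \emph{optimal moves} stabilises into a cycle --- a different kind of argument that your plan does not contain.

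Second, you have the relative difficulty of your two ingredients backwards: boundedness, which you describe as ``where the real work lies,'' is actually the easy half for taking-no-breaking games, because the $O$-heap is never split and so each $f_\mu$ satisfies a clean one-dimensional recursion for $n$ larger than the maximal removable amount $d$. Concretely, let $P=\max_k p_k$ over the moves that leave one nonempty heap, let $Q=\max_l q_l$, and let $B$ bound the finitely many sequences $f_{\mu'}$ with $\mu'$ below $\mu$ (induction on $\mu$). Every position has the move realising $P$, so $f_\mu(n)\ge P-\max_{1\le j\le d}f_\mu(n-j)$, while optimality gives $f_\mu(n)\le\max\bigl(P-\min_{1\le j\le d}f_\mu(n-j),\,Q+B\bigr)$. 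Thus if a window of $d$ consecutive values lies in $[P/2-C,\,P/2+C]$ with $C\ge Q+B-P/2$, the next value lies in the same interval: the $n$-moves reflect the window about $P/2$ and the $m$-moves are capped by $Q+B$. So boundedness is a short induction (your mirroring-strategy sketch is not needed), and the genuinely open gap in your proposal is the one you flagged but deferred: periodicity when the group generated by the $p_k$ and $q_l$ is dense in $\mathbb{R}$. As it stands, your write-up would prove a commensurable-scores special case and leave the conjecture itself open.
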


\subsubsection{Taking and Breaking}

Another type of nim games we can examine are taking and breaking games.  That is games where after the player removes some beans from a heap he must break the remainder into two or more heaps. This is more general than taking-no-breaking games, since taking-no-breaking games are a subset of taking and breaking games.

There are several problems with examining taking and breaking scoring games.  The first is that we cannot even say that the function $\G_s(n\plus m)$ is bounded.  The reason is that with each iteration you are increasing the number of heaps, which may increase the value of the function as $n$ increases.  So we cannot put a bound on the function as we could with subtraction game and taking-no-breaking games.

Another problem is that if we were to say examine the game $0.26$, which means take one bean bean and leave one non-empty heap, or take two beans and leave either two non-empty heaps, or one non-empty heap, the number of computations required to find $\G_s(n)$ increases exponentially with $n$.  Since a heap of size $n-2$ may be broken into two smaller heaps $n_1$ and $n_2$, we must therefore also compute the value of $\G_s(n_1\plus n_2)$.  

However if $n_1-2=$ or $n_2-2$ may also be broken into two smaller heaps, say $n_1'$, $n_1''$, $n_2'$ and $n_2''$ then we must compute the value of $\G_s(n_1'\plus n_1''\plus n_2)$ and $\G_s(n_1\plus n_2'\plus n_2'')$.  This process will continue until we have heaps that are too small to be broken up.  So this means that computing $\G_s(n)$ for a taking and breaking game is a lot harder, than for a taking-no-breaking game, simply due to the number of computations involved.

So we have the following conjecture.

\begin{conjecture}
Let $O=(n_1n_2\dots n_k, p_1p_2\dots p_k)$ and $P=(m_1m_2\dots m_l, q_1q_2\dots q_l)$ be two finite octal games then there exists a $t$ such that;

$$\G_s(n+t\plus m)=\G_s(n\plus m)$$

\end{conjecture}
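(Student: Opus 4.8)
The plan is to imitate the Guy--Smith periodicity theorem from normal play, but two features of the scoring setting force new ingredients before that machinery can be set up. The decisive one is that there is no Sprague--Grundy decomposition: $\G_s(a\plus b)$ is \emph{not} a function of $\G_s(a)$ and $\G_s(b)$, so the recursion for $\G_s(n\plus m)$ does not close on single-heap values. Expanding $\G_s(n\plus m)$, a breaking move on $n$ produces positions $\G_s(a\plus b\plus m)$ with $a+b=n-k$, genuinely two-heap-plus-context positions, which spawn three-heap positions, and so on. Hence the object I must prove eventually periodic is not one sequence but the single-heap ``slice'' of $\G_s$ sitting inside an arbitrary context $X$ (a finite multiset of $O$- and $P$-heaps). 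I would therefore attach to each heap size $n$ its \emph{interaction type} $\tau_n\colon X\mapsto \G_s(n\plus X)$ and aim to prove the uniform statement that the sequence $(\tau_n)$ takes only finitely many values and is eventually periodic; reading off $X=m$ then gives the conjecture with $t$ the period.

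The first real obstacle is boundedness. For taking-no-breaking games the heap count never grows, which is exactly what pins $\G_s$ between fixed bounds; once breaking is allowed the heap count increases with every split and that argument collapses, so we cannot even assume the values are bounded. I would attack this by a strategy-pairing argument: show that from any position a player can stop the opponent from ever building a lead exceeding a constant $C=C(O,P)$, by answering a move scoring $p_k$ inside a component with a bounded compensating reply in that same component. The natural vehicle is an induction on the total number of beans carrying a potential function that charges for the number of heaps, so that the extra score a player might extract by repeatedly breaking is telescoped away. Proving $|\G_s(H)|\leq C$ uniformly over all positions $H$ is, I expect, the genuine crux of the conjecture.

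Granting boundedness, only finitely many values of $\G_s$ can occur, because each value is an integer combination of the finitely many point values $p_i,q_j$ confined to a bounded interval (I would first assume the points are rational, or more generally generate a discrete subgroup of $\mathbb{R}$; the dense case needs a separate argument bounding the coefficients). The harder claim is that only finitely many interaction types $\tau_n$ occur: once this holds, pigeonhole yields $n_1<n_2$ with $\tau_{n_1}=\tau_{n_2}$, and $t=n_2-n_1$ is the candidate period. Finiteness of the type alphabet is where the lack of an additive score decomposition bites, since types are functions on all contexts rather than single numbers.

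The last step is to show the type recurrence is eventually periodic, and here lies the second obstacle. Writing out $\tau_n$, the non-breaking moves contribute $\max_i\{p_i-\tau_{n-i}(X)\}$, a clean finite-window dependence, but the breaking moves contribute $\max\{\tau_a(b\plus X)\}$ over all splits $a+b=n-k$, a \emph{convolutional} dependence on the whole prefix $\tau_0,\dots,\tau_{n-1}$, and convolutional recurrences over a finite alphabet need not be eventually periodic. I would try to tame it exactly as the earlier subtraction and taking-no-breaking conjectures suggest: prove that for $n$ large the optimal move is attained either by a bounded-window take or by a split in which one part is bounded, so that the effective dependence collapses to a finite window. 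With that reduction the type sequence obeys an ordinary finite-window recurrence over a finite alphabet, and a Guy--Smith propagation --- matching the options of $(n+t)\plus X$ with those of $n\plus X$ by routing the shift of $t$ onto the larger part of each split and invoking the uniform induction hypothesis with $b\plus X$ as the new context --- finishes the argument. The two places I expect to resist are the uniform boundedness under breaking and this reduction of the convolutional split-term to a bounded window.
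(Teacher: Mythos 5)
You are attempting what is, in the paper, an open conjecture: the paper gives no proof, and its surrounding discussion explicitly flags the very difficulties you rediscover --- that once breaking moves are allowed the number of heaps grows with each move, so that one ``cannot even say that the function $\G_s(n\plus m)$ is bounded,'' and that the recursion forces evaluation of ever-larger multisets of heaps. Your plan organizes these obstacles well, but it is a research programme rather than a proof: every load-bearing step is stated as a goal and deferred, as you yourself admit. Concretely, three claims carry the entire argument and none is established. First, uniform boundedness $|\G_s(H)|\leq C(O,P)$ over all positions $H$: your strategy-pairing sketch assumes a ``bounded compensating reply in the same component,'' but such a reply may itself be a breaking move that creates new heaps, and the potential function that is supposed to telescope this away is never exhibited; it is entirely possible that the values are unbounded, which would falsify your boundedness lemma while leaving the conjecture itself untouched (the paper conjectures periodicity, not boundedness). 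Second, finiteness of the interaction-type alphabet: a type $\tau_n$ is a function on the infinite set of contexts $X$, so bounded values give no pigeonhole by themselves; you need infinitely many heap sizes to collapse into finitely many classes of behaviour on \emph{all} contexts simultaneously, and this is precisely the kind of finite-quotient statement that is known to fail for some mis\`ere octal games (Plambeck's work, cited in the paper), which should be read as a warning rather than an encouragement. Third, the reduction of the convolutional split term $\max\{\tau_a(b\plus X)\}$ to a bounded window is exactly the ``why is greedy eventually optimal'' question that the paper identifies as the obstruction even in the far easier taking-no-breaking case, where its own conjecture remains unproved; assuming that reduction here is assuming the hardest part of the problem.

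In short, you have correctly identified the skeleton a proof would need, and your assessment of where it will resist is accurate --- but those places of resistance are the entire mathematical content of the conjecture, so the proposal as written establishes nothing beyond the paper's own commentary. Moreover, at least one of your intermediate claims (uniform boundedness) may be false even if the conjecture is true, so before investing in the pairing argument I would test it computationally on a small breaking game such as $0.26$ to see whether the values of $\G_s$ in multi-heap positions actually stay bounded.
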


While we feel that this conjecture may be true, it is certainly not as strong a conjecture as conjecture \ref{period}, for the reasons previously given.  However studying these games would certainly be interesting and anything anyone could find out about them would be useful.

\section{Conclusion}

We hope that we have given the readers some interesting new ideas about the types of games that can be studied with scoring play theory, as well as opening up a whole new world of impartial games that can be researched.  We have simply introduced the ideas, but there is still much to be learned from these fascinating games.

\section{Acknowledgements}

I would like to thank my supervisor Keith Edwards for giving me a lot of help with this paper, particularly in helping me gather a lot of data about the function $\G_s(n)$ and proving lemma 11.

\end{document}